\newtheorem{theorem}{Theorem}
\theoremstyle{plain}
\newtheorem{corollary}{Corollary}
\newtheorem{lemma}{Lemma}
\newtheorem{problem}{Problem}
\newtheorem{proposition}{Proposition}
\newtheorem{remark}{Remark}
\numberwithin{equation}{section}
\begin{document}
\title[An $\epsilon$ characterization of the vertices of tetrahedra]{An $\epsilon$ characterization of the vertices of tetrahedra in the three dimensional Euclidean Space}
\author{Anastasios N. Zachos}
\address{Greek Ministry of Education, Athens, Greece}
\email{azachos@gmail.com}

\keywords{weighted Fermat-Torricelli problem, weighted
Fermat-Torricelli point, tetrahedra}
\subjclass{51E10,52B10}
\begin{abstract}
We determine a positive real number (weight), which corresponds to a vertex of a tetrahedron and it depends on the three weights which correspond to the other three vertices and an infinitesimal number $\epsilon.$ As a limiting case, for $\epsilon\to 0$ the quad of the corresponding weights yields a degenerate weighted Fermat-Torricelli tree which coincides with the three neighboring edges of the tetrahedron and intersect at this vertex.

\end{abstract}\maketitle

\section{Introduction}

Let $ A_{1}A_{2}A_{3}A_{4}$ be a tetrahedron in $\mathbb{R}^{3}.$
We denote by $B_{i}$ a non-negative number (weight) which
corresponds to each vertex $A_{i},$ $A_{0}$ be a point in
$\mathbb{R}^{3},$ by $a_{ij}$ be the Euclidean distance of the
linear segment $A_{i}A_{j},$ and by $\vec {u}(A_{i},A_{j})$ is the unit vector with direction from
$A_{i}$ to $A_{j},$ for $i,j=0,1,2,3,4$ and $i\ne j.$

The weighted Fermat-Torricelli problem for
$A_{1}A_{2}A_{3}A_{4}$ in $\mathbb{R}^{3}$ states that:

\begin{problem}\label{5FT}
Find a point $A_{0}$ (weighted Fermat-Torricelli point)
\begin{equation} \label{eq:001}
\sum_{i=1}^{4}B_{i}a_{0i}\to min.
\end{equation}
\end{problem}

Y. Kupitz and H. Martini proved the existence and uniqueness of the weighted
Fermat-Torricelli point for $n$ non-collinear and non-coplanar points in
$\mathbb{R}^{m}$ (\cite{Kup/Mar:97}, \cite{BolMa/So:99}).
Furthermore, a characterization of the weighted Fermat-Torricelli point (weighted floating and absorbed cases) is given
in \cite[Theorem~18.37, p.~250]{BolMa/So:99}).

By setting $n=4$ and $m=3,$ the following theorem is given for a tetrahedron $A_{1}A_{2}A_{3}A_{4}:$

\begin{theorem}\cite[Theorem~18.37, p.~250]{BolMa/So:99})\label{theor}

Let $A_{1}A_{2}A_{3}A_{4}$ be a tetrahedron in $\mathbb{R}^{3}.$

(i)The weighted Fermat-Torricelli point $A_{0}$ of
$A_{1}A_{2}A_{3}A_{4}$ exists and is unique.

(ii) If
\[ \|\sum_{j=1}^{4}B_{j}\vec {u}(A_{i},A_{j})\|>B_i, i\neq j, \] for
{i,j}={1,2,3,4}, then

(a) the weighted Fermat-Torricelli point does not belong in $\{A_{1},A_{2},A_{3},A_{4}\}$ (Weighted Floating Case), \\

(b) \[\sum_{i=1}^{4}B_{i}\vec{u}(A_{0},A_{i})=\vec{0}\]

(Weighted balancing condition).

(iii) If there is some i with \[ \|{\sum_{j=1}^{4}B_{j}\vec
u(A_i,A_j)}\|\le B_i, i\neq j. \] for {i,j}={1,2,3,4}, then the
weighted Fermat-Torricelli point is the vertex $A_{i}$

(Weighted Absorbed Case).
\end{theorem}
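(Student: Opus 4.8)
The plan is to work throughout with the single objective function $f(X):=\sum_{i=1}^{4}B_{i}\norm{X-A_{i}}$ on $\mathbb{R}^{3}$ and to deduce all three parts from the convexity of $f$ together with the structure of its gradient and subdifferential.

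For part (i) I would first note that $f$ is continuous and coercive: fixing an index $k$ with $B_{k}>0$ gives $f(X)\ge B_{k}\norm{X-A_{k}}\to\infty$ as $\norm{X}\to\infty$, so a minimizer exists after restricting to a sufficiently large closed ball and invoking compactness. For uniqueness I would use that each map $X\mapsto\norm{X-A_{i}}$ is convex and, more importantly, that --- since $A_{1}A_{2}A_{3}A_{4}$ is a genuine tetrahedron and hence no line contains three of its vertices --- along any nondegenerate segment $[X,Y]$ at least two of these four maps are \emph{strictly} convex, namely those whose centre $A_{i}$ is off the line through $X$ and $Y$. Thus $f$ is strictly convex on $\mathbb{R}^{3}$ and its minimizer $A_{0}$ is unique.

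For the floating case I would compute that, away from the four vertices, $f$ is differentiable with $\nabla f(X)=-\sum_{i=1}^{4}B_{i}\,\vec u(X,A_{i})$; so if the minimizer $A_{0}$ is not a vertex then $\nabla f(A_{0})=\vec 0$, which is exactly the balancing condition $\sum_{i=1}^{4}B_{i}\,\vec u(A_{0},A_{i})=\vec 0$ of (ii)(b). To obtain (ii)(a) I would argue by contraposition via the one-sided directional derivative of $f$ at a vertex $A_{i}$ in a unit direction $\vec v$, namely $f'(A_{i};\vec v)=B_{i}-\langle\vec w_{i},\vec v\rangle$ with $\vec w_{i}:=\sum_{j\neq i}B_{j}\,\vec u(A_{i},A_{j})$: under the hypothesis $\norm{\vec w_{i}}>B_{i}$ (so in particular $\vec w_{i}\neq\vec 0$) the choice $\vec v=\vec w_{i}/\norm{\vec w_{i}}$ yields $f'(A_{i};\vec v)=B_{i}-\norm{\vec w_{i}}<0$, so no vertex is even a local minimum and therefore $A_{0}\notin\{A_{1},A_{2},A_{3},A_{4}\}$.

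For the absorbed case (iii) I would compute the subdifferential of $f$ at a vertex $A_{i}$: writing $f=B_{i}\norm{\cdot-A_{i}}+g$, where $g:=\sum_{j\neq i}B_{j}\norm{\cdot-A_{j}}$ is smooth near $A_{i}$ with $\nabla g(A_{i})=-\vec w_{i}$, one gets $\partial f(A_{i})=\{\,h-\vec w_{i}:\norm{h}\le B_{i}\,\}$, i.e.\ the closed ball of radius $B_{i}$ centred at $-\vec w_{i}$. Since $f$ is convex, $A_{i}$ is its global minimizer precisely when $\vec 0\in\partial f(A_{i})$, that is when $\norm{\vec w_{i}}\le B_{i}$; by the uniqueness from part (i) this can hold for at most one index, which is (iii). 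I expect the main difficulty to lie exactly at the two points where $f$ fails to be differentiable: establishing strict convexity where some distance term has a corner (handled through the non-collinearity of the tetrahedron's vertices), and the careful handling of one-sided derivatives and subgradients at the vertices, where the naive gradient argument no longer applies.
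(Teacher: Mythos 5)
The paper offers no proof of this statement: it is quoted verbatim (as Theorem~18.37 of Boltyanski--Martini--Soltan, going back to Kupitz--Martini) and used as a black box, so there is no in-paper argument to compare yours against. Your proposal is the standard convex-analysis proof of this result and is essentially correct: coercivity plus compactness for existence; strict convexity of $f$ along every nondegenerate segment (using that no line meets three vertices of a nondegenerate tetrahedron, so at least two of the maps $X\mapsto\norm{X-A_i}$ are strictly convex on any given segment) for uniqueness; the gradient identity $\nabla f(X)=-\sum_i B_i\vec u(X,A_i)$ off the vertices for (ii)(b); the one-sided directional derivative $f'(A_i;\vec v)=B_i-\langle\vec w_i,\vec v\rangle$ with the optimal direction $\vec v=\vec w_i/\norm{\vec w_i}$ for (ii)(a); and the subdifferential $\partial f(A_i)=\overline{B}(-\vec w_i,B_i)$ together with the criterion $\vec 0\in\partial f(A_i)$ for (iii). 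These are exactly the right tools, and you correctly isolate the only delicate points, namely the nonsmoothness of $f$ at the vertices.

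One caveat you should make explicit: the paper declares the $B_i$ merely non-negative, and your uniqueness argument silently needs the weights of the two off-line vertices to be positive for their strictly convex terms to survive in the sum. If, say, $B_3=B_4=0$ and $B_1=B_2>0$, then $f$ is affine (indeed constant) on the segment $A_1A_2$ and the minimizer is not unique, so part (i) as literally stated fails for general non-negative weights. The theorem is really a statement about positive weights (as in the original sources and in the rest of the paper, where the $B_i$ produced by the inverse problem are positive), and with that hypothesis your argument goes through; you should either assume $B_i>0$ for all $i$ or note which configurations of vanishing weights still permit the strict-convexity argument.
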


The unique solution of the weighted Fermat-Torricelli problem for $A_{1}A_{2}A_{3}A_{4}$ is called a weighted Fermat-Torricelli tree
having one node (weighted Fermat-Torricelli point) with degree four (weighted floating case). A numerical approach to find the weighted Fermat-Torricelli tree for $A_{1}A_{2}A_{3}A_{4},$ by introducing a method of differentiation of the length of a linear segment to a specific dihedral angle is given in \cite{Zach/Zou:09}.

The inverse weighted Fermat-Torricelli problem for tetrahedra in
$\mathbb{R}^{3}$ states that:

\begin{problem}
Given a point $A_{0}$ which belongs to the interior of
$A_{1}A_{2}A_{3}A_{4}$ in $\mathbb{R}^{3}$, does there exist a
unique set of positive weights $B_{i},$ such that
\begin{displaymath}
 B_{1}+B_{2}+B_{3}+B_{4} = c =const,
\end{displaymath}
for which $A_{0}$ minimizes
\begin{displaymath}
 f(A_{0})=\sum_{i=1}^{4}B_{i}a_{0i}.
\end{displaymath}

\end{problem}

The unique solution of the inverse weighted Fermat-Torricelli problem for tetrahedra in $\mathbb{R}^{3}$ is given in \cite{Zach/Zou:09}.
We denote by $\alpha_{i,j0k}$ the angle that is formulated by the line segment that connects $A_{0}$ with the trace of the orthogonal projection of $A_{i}$
to the plane defined by $\triangle A_{j}A_{0}A_{k}$ and we set $\alpha_{lmn}\equiv \angle A_{l}A_{m}A_{n},$ for $j,k=1,2,3,4$ and $l,m,n=0,1,2,3,4.$
\begin{proposition}\cite[Proposition~1, Solution of Problem~2]{Zach/Zou:09}\label{propo5}
The weight $B_{i}$ are uniquely determined by the formula:
\begin{equation}\label{inverse111}
B_{i}=\frac{C}{1+\|\frac{\sin{\alpha_{i,k0l}}}{\sin{\alpha_{j,k0l}}}\|+\|\frac{\sin{\alpha_{i,j0l}}}{\sin{\alpha_{k,j0l}}}\|+\|\frac{\sin{\alpha_{i,k0j}}}{\sin{\alpha_{l,k0j}}}\|},
\end{equation}
for $i,j,k,l=1,2,3,4$ and $i \neq j\neq k\neq l.$
\end{proposition}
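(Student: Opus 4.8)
The plan is to read off the weights from the weighted balancing condition of Theorem~\ref{theor}(ii)(b). Since $A_0$ lies in the interior of $A_{1}A_{2}A_{3}A_{4}$, the weighted Fermat--Torricelli problem for this tetrahedron with the sought weights is necessarily in the weighted floating case, so any admissible quadruple $(B_{1},B_{2},B_{3},B_{4})$ of positive weights with $B_{1}+B_{2}+B_{3}+B_{4}=C$ must satisfy
\begin{equation}\label{eq:bal}
\sum_{m=1}^{4} B_{m}\,\vec u(A_{0},A_{m})=\vec 0 .
\end{equation}
Write $\vec u_{m}:=\vec u(A_{0},A_{m})$. Because $A_{0}$ is an interior point, $\vec 0$ lies in the interior of the convex cone generated by $\vec u_{1},\dots,\vec u_{4}$; in particular the $3\times 4$ matrix $[\vec u_{1}\,\vec u_{2}\,\vec u_{3}\,\vec u_{4}]$ has rank $3$, so the solution set of \eqref{eq:bal} is a one–dimensional line spanned by a strictly positive vector. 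Imposing the normalization $B_{1}+B_{2}+B_{3}+B_{4}=C$ then pins down $(B_{1},B_{2},B_{3},B_{4})$ uniquely; this already yields the uniqueness assertion, and it remains only to make the solution explicit.

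To do so, fix $i$ and put $\{j,k,l\}=\{1,2,3,4\}\setminus\{i\}$. Let $\Pi_{kl}$ be the plane through $A_{k},A_{0},A_{l}$ and let $\vec n_{kl}$ be a unit normal to $\Pi_{kl}$. Taking the inner product of \eqref{eq:bal} with $\vec n_{kl}$ annihilates $B_{k}\vec u_{k}$ and $B_{l}\vec u_{l}$, since these vectors lie in $\Pi_{kl}$, and leaves
\begin{equation}\label{eq:proj}
B_{i}\,(\vec u_{i}\cdot\vec n_{kl})+B_{j}\,(\vec u_{j}\cdot\vec n_{kl})=0 .
\end{equation}
By the very definition of $\alpha_{i,k0l}$ as the angle between the segment $A_{0}A_{i}$ and its orthogonal projection on $\Pi_{kl}$, elementary solid geometry gives $\abs{\vec u_{i}\cdot\vec n_{kl}}=\sin\alpha_{i,k0l}$ and likewise $\abs{\vec u_{j}\cdot\vec n_{kl}}=\sin\alpha_{j,k0l}$, where $\sin\alpha_{j,k0l}\neq 0$ because otherwise $A_{j}$ would lie in the plane of $A_{0},A_{k},A_{l}$ and the tetrahedron would be degenerate. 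Hence \eqref{eq:proj} gives $B_{j}=B_{i}\,\abs{\sin\alpha_{i,k0l}/\sin\alpha_{j,k0l}}$. Repeating the argument with the plane through $A_{j},A_{0},A_{l}$ yields $B_{k}=B_{i}\,\abs{\sin\alpha_{i,j0l}/\sin\alpha_{k,j0l}}$, and with the plane through $A_{k},A_{0},A_{j}$ yields $B_{l}=B_{i}\,\abs{\sin\alpha_{i,k0j}/\sin\alpha_{l,k0j}}$. Substituting these three relations into $B_{i}+B_{j}+B_{k}+B_{l}=C$, factoring out $B_{i}$, and solving for $B_{i}$ produces exactly \eqref{inverse111}.

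The main technical point to be careful about is the sign bookkeeping in \eqref{eq:proj}: one must verify, using that $A_{0}$ is interior to the tetrahedron (equivalently that $\vec 0$ is interior to the cone spanned by $\vec u_{1},\dots,\vec u_{4}$), that for each of the three auxiliary planes the two surviving coefficients $\vec u_{i}\cdot\vec n$ and $\vec u_{(\cdot)}\cdot\vec n$ are nonzero and of opposite sign, so that every quotient of sines in the denominator of \eqref{inverse111} is a genuine positive real number and all resulting $B_{m}$ are positive. Granting this, together with the nondegeneracy remarks above guaranteeing that none of the angles $\alpha_{j,k0l},\alpha_{k,j0l},\alpha_{l,k0j}$ vanishes, the derivation is complete: the quadruple $(B_{1},B_{2},B_{3},B_{4})$ satisfying \eqref{eq:bal} and the normalization exists, is unique, and is given by the stated closed form.
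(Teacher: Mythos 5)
Your derivation is correct and is essentially the standard one: the paper itself states this proposition only as a citation to \cite{Zach/Zou:09} without reproducing a proof, and the argument there is exactly your route — impose the balancing condition $\sum_m B_m\vec u(A_0,A_m)=\vec 0$, project onto the normal of each plane $A_kA_0A_l$ to isolate the ratio $B_j/B_i=|\sin\alpha_{i,k0l}/\sin\alpha_{j,k0l}|$, and normalize by $\sum_m B_m=C$. One small correction: $\sin\alpha_{j,k0l}=0$ would mean $A_j$ lies in the plane through $A_0,A_k,A_l$, i.e.\ $A_0$ is coplanar with $A_j,A_k,A_l$ and hence lies on a face plane, contradicting the interiority of $A_0$ — it is this, not non-degeneracy of the tetrahedron $A_1A_2A_3A_4$, that rules the case out.
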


We need the following two formulas which have been derived in \cite{Zach/Zou:09} and \cite{Zachos:20}, respectively:

\begin{lemma}\cite{Zach/Zou:09}
The ratio $\frac{B_{j}}{B_{i}}$ is given by:
\begin{equation}\label{ratioji}
\frac{B_{j}}{B_{i}}=\sqrt{ \| \frac{\sin^{2}\alpha_{k0m}-\cos^{2}\alpha_{m0i}-\cos^{2}\alpha_{k0i}+2\cos\alpha_{k0m}\cos\alpha_{m0i}\cos\alpha_{k0i}}{
\sin^{2}\alpha_{k0m}-\cos^{2}\alpha_{m0j}-\cos^{2}\alpha_{k0j}+2\cos\alpha_{k0m}\cos\alpha_{m0j}\cos\alpha_{k0j}} \| }
\end{equation}
for $i,j,k,m=1,2,3,4.$
\end{lemma}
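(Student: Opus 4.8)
\emph{Proof strategy.} The whole statement will follow from the weighted balancing condition of Theorem~\ref{theor}(ii)(b) together with the elementary identity expressing the squared scalar triple product of three vectors as the determinant of their Gram matrix. Write $\vec{u}_{\ell}:=\vec{u}(A_{0},A_{\ell})$, $\ell=1,2,3,4$, for the four unit vectors at the weighted Fermat--Torricelli point; then $\vec{u}_{\ell}\cdot\vec{u}_{p}=\cos\alpha_{\ell 0p}$ for $\ell\neq p$, and in the weighted floating case the balancing condition reads
\begin{equation*}
B_{i}\vec{u}_{i}+B_{j}\vec{u}_{j}+B_{k}\vec{u}_{k}+B_{m}\vec{u}_{m}=\vec{0},
\end{equation*}
for the distinct indices $i,j,k,m$.

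First I would apply to this identity the linear functional $\vec{x}\mapsto\vec{x}\cdot(\vec{u}_{k}\times\vec{u}_{m})$. Since $\vec{u}_{k}\cdot(\vec{u}_{k}\times\vec{u}_{m})=\vec{u}_{m}\cdot(\vec{u}_{k}\times\vec{u}_{m})=0$, the $\vec{u}_{k}$- and $\vec{u}_{m}$-terms drop out and one is left with $B_{i}\,\vec{u}_{i}\cdot(\vec{u}_{k}\times\vec{u}_{m})+B_{j}\,\vec{u}_{j}\cdot(\vec{u}_{k}\times\vec{u}_{m})=0$. Both triple products are non-zero: if $\vec{u}_{j}\cdot(\vec{u}_{k}\times\vec{u}_{m})=0$ then $\vec{u}_{j},\vec{u}_{k},\vec{u}_{m}$ are coplanar, and the balancing condition pulls $\vec{u}_{i}$ into the same plane, forcing $A_{1},A_{2},A_{3},A_{4}$ into a common plane through $A_{0}$ --- impossible for a genuine tetrahedron; and $\vec{u}_{i}\cdot(\vec{u}_{k}\times\vec{u}_{m})\neq 0$, since otherwise $B_{j}=0$. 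Hence
\begin{equation*}
\frac{B_{j}}{B_{i}}=-\,\frac{\vec{u}_{i}\cdot(\vec{u}_{k}\times\vec{u}_{m})}{\vec{u}_{j}\cdot(\vec{u}_{k}\times\vec{u}_{m})},
\end{equation*}
and, using $B_{i},B_{j}>0$, we may square and take the positive root:
\begin{equation*}
\frac{B_{j}}{B_{i}}=\sqrt{\left\|\frac{\bigl(\vec{u}_{i}\cdot(\vec{u}_{k}\times\vec{u}_{m})\bigr)^{2}}{\bigl(\vec{u}_{j}\cdot(\vec{u}_{k}\times\vec{u}_{m})\bigr)^{2}}\right\|}.
\end{equation*}

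To finish, I would rewrite each squared triple product via $\bigl(\vec{a}\cdot(\vec{b}\times\vec{c})\bigr)^{2}=\det G(\vec{a},\vec{b},\vec{c})$, the determinant of the Gram matrix. For the unit triple $\vec{u}_{i},\vec{u}_{k},\vec{u}_{m}$ this is the $3\times 3$ determinant with unit diagonal and off-diagonal cosines $\cos\alpha_{i0k}$, $\cos\alpha_{i0m}$, $\cos\alpha_{k0m}$, whose expansion equals $1-\cos^{2}\alpha_{k0m}-\cos^{2}\alpha_{m0i}-\cos^{2}\alpha_{k0i}+2\cos\alpha_{k0m}\cos\alpha_{m0i}\cos\alpha_{k0i}$; replacing $1-\cos^{2}\alpha_{k0m}$ by $\sin^{2}\alpha_{k0m}$ produces precisely the numerator in \eqref{ratioji}, and the same computation with $j$ in place of $i$ gives the denominator. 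The computation is short, so the only items that demand care are the non-degeneracy argument above (which guarantees the denominators do not vanish) and keeping the index bookkeeping straight --- in particular the displayed expressions are symmetric under $k\leftrightarrow m$, so the formula does not depend on which of the two remaining vectors is singled out first. I do not anticipate any genuine obstacle beyond these routine points.
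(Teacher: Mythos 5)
Your proof is correct. Note first that the paper itself offers no proof of this lemma: it is imported verbatim from \cite{Zach/Zou:09}, so there is no in-text argument to compare against. Your derivation --- contract the balancing condition $\sum_{\ell}B_{\ell}\vec{u}_{\ell}=\vec{0}$ with $\vec{u}_{k}\times\vec{u}_{m}$ to isolate $B_{j}/B_{i}$ as a ratio of scalar triple products, then convert each squared triple product into the Gram determinant $1-\cos^{2}\alpha_{i0k}-\cos^{2}\alpha_{i0m}-\cos^{2}\alpha_{k0m}+2\cos\alpha_{i0k}\cos\alpha_{i0m}\cos\alpha_{k0m}=\sin^{2}\alpha_{k0m}-\cos^{2}\alpha_{k0i}-\cos^{2}\alpha_{m0i}+2\cos\alpha_{k0m}\cos\alpha_{m0i}\cos\alpha_{k0i}$ --- checks out, and your non-degeneracy argument (coplanarity of three of the unit vectors plus the balancing condition would force all four vertices into a plane through $A_{0}$) correctly rules out a vanishing denominator. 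This is in substance the same mechanism the source paper uses, only phrased more cleanly: the quantity $\vec{u}_{i}\cdot(\vec{u}_{k}\times\vec{u}_{m})$ equals $\sin\alpha_{k0m}$ times the component of $\vec{u}_{i}$ normal to the plane of $\triangle A_{k}A_{0}A_{m}$, i.e.\ $\sin\alpha_{k0m}\sin\alpha_{i,k0m}$ in the paper's projection-angle notation, so your identity simultaneously explains why \eqref{ratioji} coincides with the ratio $\|\sin\alpha_{i,k0m}/\sin\alpha_{j,k0m}\|$ appearing in \eqref{ratioije}. Two minor points of hygiene: the indices $i,j,k,m$ must be pairwise distinct (the statement's ``$i,j,k,m=1,2,3,4$'' is sloppy on this), and the whole computation presupposes the weighted floating case, since the balancing condition is the input; it is worth saying so explicitly, as the surrounding paper also works near the boundary of the absorbed case.
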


\begin{lemma}\cite[Proposition~1]{Zachos:20}
The angles $\alpha_{i,k0m}$ depend on exactly five given angles
$\alpha_{102},$ $\alpha_{103},$ $\alpha_{104},$ $\alpha_{203}$ and
$\alpha_{204}.$ 

The sixth angle $\alpha_{304}$ is calculated by the following formula:

\begin{eqnarray}\label{calcalpha3042}
&&\cos\alpha_{304}=\frac{1}{4} [4 \cos\alpha _{103} (\cos\alpha
_{104}-\cos\alpha _{102} \cos\alpha _{204})+\nonumber\\
&&+2 \left(b+2 \cos\alpha _{203} \left(-\cos\alpha _{102}
\cos\alpha _{104}+\cos\alpha _{204}\right)\right)] \csc{}^2\alpha
_{102}\nonumber\\
\end{eqnarray}

where

\begin{eqnarray}\label{calcalpha304auxvar}
b\equiv\sqrt{\prod_{i=3}^{4}\left(1+\cos\left(2 \alpha
_{102}\right)+\cos\left(2 \alpha _{10i}\right)+\cos\left(2 \alpha
_{20i}\right)-4 \cos\alpha _{102} \cos\alpha _{10i} \cos\alpha
_{20i}\right)}\nonumber\\.
\end{eqnarray}

for $i,k,m=1,2,3,4,$ and $i \ne k \ne m.$
\end{lemma}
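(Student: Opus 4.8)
The plan is to reduce everything to the Gram matrix of the four rays issuing from $A_0$. Let $e_i:=\vec u(A_0,A_i)$ for $i=1,2,3,4$ and set
\begin{equation*}
G=\bigl(\langle e_i,e_j\rangle\bigr)_{i,j=1}^{4}=\bigl(\cos\alpha_{i0j}\bigr)_{i,j=1}^{4}.
\end{equation*}
Since $e_1,\dots,e_4\in\mathbb{R}^3$, the matrix $G$ is positive semidefinite of rank at most $3$, so $\det G=0$; this is the single scalar relation constraining the six cosines $\cos\alpha_{i0j}$. Conversely, any symmetric matrix with unit diagonal that is positive semidefinite of rank $3$ is, up to an isometry of $\mathbb{R}^3$, the Gram matrix of four unit vectors, so one may prescribe the five angles $\alpha_{102},\alpha_{103},\alpha_{104},\alpha_{203},\alpha_{204}$ freely on an open set, after which $\det G=0$ pins down the sixth; I would take this as the precise meaning of ``depends on exactly five given angles''. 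Since every quantity associated with the configuration of rays is a function of the $\langle e_p,e_q\rangle$ --- in particular each projection angle $\alpha_{i,k0m}$, which is the angular distance on the unit sphere centred at $A_0$ from $e_i$ to the great circle through $e_k$ and $e_m$ --- these projection angles are functions of the same five angles. This proves the first assertion.

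For the formula \eqref{calcalpha3042} I would expand $\det G=0$ as an equation in the one unknown $x:=\cos\alpha_{304}$, which occurs in $G$ only in positions $(3,4)$ and $(4,3)$. In the Leibniz expansion, $x^{2}$ arises only from permutations using both of these entries, contributing $-\det\!\begin{pmatrix}1&\cos\alpha_{102}\\ \cos\alpha_{102}&1\end{pmatrix}=-\sin^{2}\alpha_{102}$. Hence $\det G=-\sin^{2}\alpha_{102}\,x^{2}+\beta x+\gamma$, where $\beta$ and $\gamma$ result from a routine cofactor expansion in the five remaining cosines; in particular
\begin{equation*}
\beta=2\cos\alpha_{103}\bigl(\cos\alpha_{104}-\cos\alpha_{102}\cos\alpha_{204}\bigr)+2\cos\alpha_{203}\bigl(\cos\alpha_{204}-\cos\alpha_{102}\cos\alpha_{104}\bigr).
\end{equation*}
Solving the quadratic,
\begin{equation*}
x=\frac{\beta\pm\sqrt{\beta^{2}+4\sin^{2}\alpha_{102}\,\gamma}}{2\sin^{2}\alpha_{102}},
\end{equation*}
and, after collecting terms, this is exactly the shape of \eqref{calcalpha3042}.

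It remains to recognise the radicand. Here I would use the classical identity that, for a symmetric $4\times4$ matrix viewed as a quadratic in its $(3,4)$-entry, the discriminant equals $4$ times the product of the principal $3\times3$ minors indexed by $\{1,2,3\}$ and by $\{1,2,4\}$; in our case these are the Gram determinants of $\{e_1,e_2,e_3\}$ and $\{e_1,e_2,e_4\}$. Writing $G_{12i}$ for the principal $3\times3$ submatrix of $G$ on $\{1,2,i\}$ and using $1+\cos2\theta=2\cos^{2}\theta$,
\begin{align*}
\det G_{12i}&=1-\cos^{2}\alpha_{102}-\cos^{2}\alpha_{10i}-\cos^{2}\alpha_{20i}+2\cos\alpha_{102}\cos\alpha_{10i}\cos\alpha_{20i}\\
&=-\tfrac{1}{2}\bigl(1+\cos2\alpha_{102}+\cos2\alpha_{10i}+\cos2\alpha_{20i}-4\cos\alpha_{102}\cos\alpha_{10i}\cos\alpha_{20i}\bigr),
\end{align*}
so $\beta^{2}+4\sin^{2}\alpha_{102}\,\gamma=4\det G_{123}\det G_{124}$ is precisely the product over $i=3,4$ appearing under the radical in \eqref{calcalpha304auxvar}, i.e.\ equals $b^{2}$. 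The two signs of the square root correspond to the geometric fact that, with $e_1,e_2$ and the angles $\alpha_{10i},\alpha_{20i}$ fixed, each of $e_3,e_4$ is determined only up to reflection in $\mathrm{span}(e_1,e_2)$; because $A_0$ lies in the interior of the tetrahedron, the four rays positively span $\mathbb{R}^3$, and this orientation requirement selects the branch with $+b$, yielding \eqref{calcalpha3042}.

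\textbf{Expected main obstacle.} The conceptual outline is short, but two steps carry the weight. The first is the algebra: checking that the discriminant $\beta^{2}+4\sin^{2}\alpha_{102}\,\gamma$ really does collapse to the clean product $4\det G_{123}\det G_{124}$ (equivalently, to the factored radicand of \eqref{calcalpha304auxvar}) is a lengthy symbolic computation. The second, and genuinely geometric rather than formal, is the branch selection: one must argue carefully from the interior-point hypothesis that only one of the two candidate values of $\cos\alpha_{304}$ is realised by a rank-$3$, positively spanning configuration of rays, and hence that the $+b$ sign is forced.
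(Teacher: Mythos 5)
The paper itself offers no proof of this lemma (it is imported from \cite[Proposition~1]{Zachos:20}), so your Gram--matrix reconstruction can only be judged on its own terms; the route you chose is the natural one and most of it checks out. With $x=\cos\alpha_{304}$, the coefficient of $x^{2}$ in $\det G$ is indeed $-\sin^{2}\alpha_{102}$, your $\beta$ is correct, the Desnanot--Jacobi identity $\det G\cdot\det G_{12}=\det G_{123}\det G_{124}-\det G(3|4)^{2}$ does give discriminant $4\det G_{123}\det G_{124}$, and the identity $1+\cos2\alpha_{102}+\cos2\alpha_{10i}+\cos2\alpha_{20i}-4\cos\alpha_{102}\cos\alpha_{10i}\cos\alpha_{20i}=-2\det G_{12i}$ shows the radicand of \eqref{calcalpha304auxvar} equals $4\det G_{123}\det G_{124}=b^{2}$. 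So the formula is recovered up to the choice of square root.

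The genuine gap is exactly where you flagged it: the branch selection is asserted, not proved, and when carried out it gives the \emph{opposite} sign to the one you (and the printed formula) adopt. Write $e_{i}=p_{i}+h_{i}n$ with $p_{i}\in\mathrm{span}(e_{1},e_{2})$ and $n$ a unit normal; then $\cos\alpha_{304}=\langle p_{3},p_{4}\rangle+h_{3}h_{4}$ with $\langle p_{3},p_{4}\rangle=\beta/(2\sin^{2}\alpha_{102})$ and $|h_{3}h_{4}|=\tfrac{1}{2}b\csc^{2}\alpha_{102}$ ($b\ge 0$ the principal root), so the two branches correspond to the sign of $h_{3}h_{4}$. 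Your own positive-spanning observation forces $h_{3}h_{4}<0$: if $e_{3}$ and $e_{4}$ lay on the same side of $\mathrm{span}(e_{1},e_{2})$, all four unit vectors would sit in a closed half-space and could not positively span $\mathbb{R}^{3}$; equivalently, the plane through $A_{1},A_{2}$ and an interior $A_{0}$ lies in the open dihedral wedge along $A_{1}A_{2}$ and separates $A_{3}$ from $A_{4}$. Hence the correct branch for an interior point is $-b$, not $+b$. A concrete check: $A_{1}=(0,0,0)$, $A_{2}=(1,0,0)$, $A_{3}=(0,1,0)$, $A_{4}=(0,0,1)$, $A_{0}=(1/4,1/4,1/4)$ gives $\cos\alpha_{102}=\cos\alpha_{103}=\cos\alpha_{104}=-1/\sqrt{33}$ and $\cos\alpha_{203}=\cos\alpha_{204}=-5/11$; the two branches evaluate to $\tfrac{3}{11}\pm\tfrac{8}{11}$, the true value is $\cos\alpha_{304}=-5/11=\tfrac{3}{11}-\tfrac{8}{11}$, and the $+b$ branch returns $1$. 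So you must either correct the sign argument (and note that \eqref{calcalpha3042} as printed requires $b$ to be taken as the negative root for the interior configurations this paper uses), or your proof establishes the formula only up to the reflection of $e_{4}$ in $\mathrm{span}(e_{1},e_{2})$.
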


If $B_{1},$ $B_{2},$ $B_{3},$ $B_{4}$ satisfy the inequalities of the floating case, we derive
a unique weighted Fermat-Torricelli tree $\{A_{1}A_{0}, A_{2}A_{0}, A_{3}A_{0}, A_{4}A_{0}\},$
which consists of four branches $A_{i}A_{0},$ for $i=1,2,3,4$ and they intersect at $A_{0}.$

If $B_{1},$ $B_{2},$ $B_{3}$ $B_{4}$ satisfy one of the inequalities of the absorbed case, we obtain
a degenerate weighted Fermat-Torricelli tree $\{A_{2}A_{1},A_{1}A_{3},A_{1}A_{4}\},$ $\{A_{1}A_{2},A_{2}A_{3},A_{2}A_{4}\}$
and $\{A_{1}A_{3},A_{3}A_{2},A_{3}A_{4}\},$ $\{A_{1}A_{4},A_{2}A_{4},A_{3}A_{4}\}.$


Assume that:

 \[ \|{\sum_{j=1}^{4}B_{j}\vec
u(A_{4},A_{j})}\|\le B_{4}, i\neq j. \]

Suppose that we choose $B_{1},$ $B_{2},$ $B_{3},$ such that

 \[ \|{\sum_{j=1}^{4}B_{j}\vec
u(A_{4},A_{j})}\|= B_{4}, i\neq j, \]

or

\begin{equation}\label{bb4}
B_{4}^2=B_{1}^2+B_{2}^2+B_{3}^2+2B_{1}B_{2}\cos\alpha_{142}+ 2B_{1}B_{3}\cos\alpha_{143}+2B_{2}B_{3}\cos\alpha_{243}
\end{equation} 
 or

\[B_{4}=f(B_{1},B_{2,}B_{3}).\]

Thus, we consider the following problem:

\begin{problem}
How can we determine the values of $B_{1},$ $B_{2},$ $B_{3},$ such that $f(B_{1}, B_{2}, B_{3})$ gives the minimum value of
$B_{4}$ that corresponds to the vertex $A_{4}$ ?
\end{problem}

If we set $B_{1}=0$ or $B_{2}=0$ or $B_{3}=0$ in Problem~1, $B_{4}$ depends of the values of two weights which are determined 
in \cite{Zachos:20b}.  




In this paper, we determine the value of $B_{4}$ by introducing an infinitesimal real number $\epsilon,$ ($\epsilon$  characterization of $A_{4}$)
such that:
$\alpha_{120}=\alpha_{124}-\epsilon,$ $\alpha_{102}= \alpha_{142}+k_{142} \epsilon,$
$\alpha_{203}= \alpha_{243}+k_{203}\epsilon,$ $\alpha_{103}=\alpha_{143}+k_{143}\epsilon$ and $\alpha_{023}=\alpha_{423}+k_{423}\epsilon,$ where $k_{142},$ $k_{203},$ $k_{143}$ and $k_{423}$ are   rational numbers,  by applying the solution of the inverse weighted
Fermat-Torricelli problem for tetrahedra in $\mathbb{R}^{3}.$


\section{An $\epsilon$ characterization of the vertices of tetrahedra in $\mathbb{R}^{3}$}

Let $A_{0}$  be an interior point of $A_{1}A_{2}A_{3}A_{4}$ in $\mathbb{R}^3.$

We denote by $h_{0,12}$ the length of the height of $\triangle
A_{0}A_{1}A_{2}$ from $A_{0}$ with respect to $A_{1}A_{2},$  by $\alpha,$ the dihedral angle which is formed
between the planes defined by $\triangle A_{1}A_{2}A_{3}$ and
$\triangle A_{1}A_{2}A_{0},$ and by $\alpha_{g_{4}}$ the dihedral
angle which is formed by the planes defined by $\triangle
A_{1}A_{2}A_{4}$ and $\triangle A_{1}A_{2}A_{3}.$

We set
$\alpha_{120}=\alpha_{124}-\epsilon,$ $\alpha_{102}= \alpha_{142}+k_{142} \epsilon,$
$\alpha_{203}= \alpha_{243}+k_{203}\epsilon,$ $\alpha_{103}=\alpha_{143}+k_{143}\epsilon$ and $\alpha_{023}=\alpha_{423}+k_{423}\epsilon,$ where $k_{142},$ $k_{203},$ $k_{143}$ and $k_{423}$ are rational numbers.

We need the following two formulas given by the following proposition which is derived in \cite{Zach/Zou:09}
and expresses $a_{03}$ and $a_{04}$ as a function w.r. to $a_{01},$ $a_{02}$ and $\alpha.$

\begin{proposition}\cite{Zach/Zou:09}\label{a03a04}
The length $a_{0i}$ depends on $a_{01}, a_{02}$ and $\alpha :$

\begin{equation}\label{eq:deral2}
a_{0i}^2=a_{02}^2+a_{2i}^2-2a_{2i}[\sqrt{a_{02}^2-h_{0,12}^2}\cos({\alpha_{12i}})+h_{0,12}\sin({\alpha_{12i}})\cos({\alpha_{g_{i}}}-\alpha)],
\end{equation}

or

\begin{equation}\label{eq:derall2n}
a_{0i}^2=a_{01}^2+a_{1i}^2-2a_{1i}[\sqrt{a_{01}^2-h_{0,12}^2}\cos({\alpha_{21i}})+h_{0,12}\sin({\alpha_{21i}})\cos({\alpha_{g_{i}}}-\alpha)].
\end{equation}

where

\begin{equation}\label{height012}
h_{0,12}=\frac{a_{01}a_{02}}{a_{12}}\sqrt{1-\left(\frac{a_{01}^{2}+a_{02}^{2}-a_{12}^2}{2a_{01}a_{02}}\right)^{2}}
\end{equation}

for $i=3,4.$ 
\end{proposition}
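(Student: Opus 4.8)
Since the statement concerns an arbitrary interior point $A_{0}$ (not the Fermat--Torricelli point), the natural route is a direct computation in Cartesian coordinates adapted to the edge $A_{1}A_{2}$. The plan is to place $A_{2}$ at the origin, $A_{1}$ on the positive $x$-axis (so $A_{1}=(a_{12},0,0)$), and to take the $xy$-plane to be the plane of $\triangle A_{1}A_{2}A_{3}$, with $A_{3}$ on the side $y>0$. Then each vertex $A_{i}$, $i=3,4$, is at distance $a_{2i}$ from the origin, the ray $A_{2}A_{i}$ makes the angle $\alpha_{12i}=\angle A_{1}A_{2}A_{i}$ with the positive $x$-axis, and the half-plane through the line $A_{1}A_{2}$ containing $A_{i}$ makes the dihedral angle $\alpha_{g_{i}}$ with the one containing $A_{3}$; hence
\begin{equation*}
A_{i}=a_{2i}\bigl(\cos\alpha_{12i},\ \sin\alpha_{12i}\cos\alpha_{g_{i}},\ \sin\alpha_{12i}\sin\alpha_{g_{i}}\bigr),
\end{equation*}
with $\alpha_{g_{3}}=0$ putting $A_{3}$ back in the $xy$-plane.

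In the same frame, the foot of the perpendicular from $A_{0}$ onto the line $A_{1}A_{2}$ lies on the $x$-axis at abscissa $a_{02}\cos\alpha_{120}$; since $h_{0,12}$ is precisely the distance from $A_{0}$ to that line we have $h_{0,12}=a_{02}\sin\alpha_{120}$, so this abscissa equals $\pm\sqrt{a_{02}^{2}-h_{0,12}^{2}}$, the sign being $+$ when $\alpha_{120}$ is acute. The transverse component of $A_{0}$ has length $h_{0,12}$ and makes dihedral angle $\alpha$ with the $xy$-plane, hence
\begin{equation*}
A_{0}=\bigl(\sqrt{a_{02}^{2}-h_{0,12}^{2}},\ h_{0,12}\cos\alpha,\ h_{0,12}\sin\alpha\bigr).
\end{equation*}
I would then expand $a_{0i}^{2}=\|A_{0}-A_{i}\|^{2}$ coordinatewise. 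The ``square'' terms collapse by $\cos^{2}+\sin^{2}=1$: the three components of $A_{0}$ contribute $(a_{02}^{2}-h_{0,12}^{2})+h_{0,12}^{2}=a_{02}^{2}$, and those of $A_{i}$ contribute $a_{2i}^{2}\cos^{2}\alpha_{12i}+a_{2i}^{2}\sin^{2}\alpha_{12i}=a_{2i}^{2}$. The cross term splits into $-2a_{2i}\sqrt{a_{02}^{2}-h_{0,12}^{2}}\,\cos\alpha_{12i}$ from the $x$-coordinates and $-2a_{2i}h_{0,12}\sin\alpha_{12i}\,(\cos\alpha\cos\alpha_{g_{i}}+\sin\alpha\sin\alpha_{g_{i}})$ from the two transverse coordinates; the angle-subtraction identity rewrites the parenthesis as $\cos(\alpha_{g_{i}}-\alpha)$, and \eqref{eq:deral2} drops out.

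For \eqref{eq:derall2n} I would rerun the same computation with $A_{1}$ at the origin and $A_{2}$ on the positive $x$-axis: the edge $A_{1}A_{2}$, and with it the dihedral data $\alpha_{g_{i}}$ and $\alpha$, are the same line and the same angles, while $a_{02}$, $a_{2i}$, $\alpha_{12i}$ are replaced by $a_{01}$, $a_{1i}$, $\alpha_{21i}$, so the identical algebra yields the second formula. Finally \eqref{height012} is the elementary altitude formula in $\triangle A_{0}A_{1}A_{2}$: the law of cosines gives $\cos(\angle A_{1}A_{0}A_{2})=\dfrac{a_{01}^{2}+a_{02}^{2}-a_{12}^{2}}{2a_{01}a_{02}}$, hence $\sin(\angle A_{1}A_{0}A_{2})$ is the corresponding square root, and equating the two area expressions $\tfrac12 a_{12}h_{0,12}$ and $\tfrac12 a_{01}a_{02}\sin(\angle A_{1}A_{0}A_{2})$ yields \eqref{height012}.

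The work here is almost entirely routine; the only care needed is with conventions. One must orient the line $A_{1}A_{2}$ once and for all so that $\alpha$ and the $\alpha_{g_{i}}$ are measured with a common sign (so that their difference, not sum, appears) and so that this convention is unaffected by the relabelling $A_{1}\leftrightarrow A_{2}$ used to obtain \eqref{eq:derall2n}; and one should note that writing the foot of the perpendicular as $+\sqrt{a_{02}^{2}-h_{0,12}^{2}}$ (resp. $+\sqrt{a_{01}^{2}-h_{0,12}^{2}}$) presupposes $\angle A_{1}A_{2}A_{0}$ (resp. $\angle A_{2}A_{1}A_{0}$) acute, which may be regarded as part of the admissible configuration. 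Neither point affects the displayed identities, so I expect no genuine obstacle beyond this bookkeeping.
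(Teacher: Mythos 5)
Your derivation is correct: the coordinate frame adapted to the edge $A_{1}A_{2}$, with the transverse components of $A_{0}$ and $A_{i}$ parametrized by the dihedral angles $\alpha$ and $\alpha_{g_{i}}$, expands $\|A_{0}-A_{i}\|^{2}$ exactly into \eqref{eq:deral2}, the relabelling $A_{1}\leftrightarrow A_{2}$ gives \eqref{eq:derall2n}, and the area argument gives \eqref{height012}. Note that the present paper offers no proof of this proposition — it is quoted from \cite{Zach/Zou:09}, where it is obtained by essentially the same decomposition (splitting $A_{0}$ and $A_{i}$ into their components along the line $A_{1}A_{2}$ and in the plane orthogonal to it, then applying the law of cosines in that orthogonal plane), so your argument is a clean, self-contained version of the standard route rather than a genuinely different one. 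Your two caveats — the consistent orientation of $\alpha$ and the $\alpha_{g_{i}}$ about the line $A_{1}A_{2}$, and the sign of $\sqrt{a_{02}^{2}-h_{0,12}^{2}}$ requiring $\angle A_{1}A_{2}A_{0}$ acute — are exactly the implicit configuration assumptions under which the displayed formulas hold, and for an interior point $A_{0}$ the half-plane condition $0<\alpha<\alpha_{g_{4}}$ justifies the appearance of $\cos(\alpha_{g_{i}}-\alpha)$.
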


By replacing the index $0\to 4$ for $i=3$ in (\ref{eq:deral2}), we derive the following corollary:

\begin{corollary}\label{alphag4}
\begin{equation}\label{ag4}
\alpha_{g_{4}}=\arccos(\frac{a_{42}^2+a_{23}^2-a_{43}^2-2a_{23}\sqrt{a_{42}^2-h_{4,12}^2}\cos({\alpha_{213}})}{2a_{23}h_{4,12}\sin\alpha_{123}})
\end{equation}

where

\begin{equation}\label{height412}
h_{4,12}=\frac{a_{41}a_{42}}{a_{12}}\sqrt{1-\left(\frac{a_{41}^{2}+a_{42}^{2}-a_{12}^2}{2a_{41}a_{42}}\right)^{2}}.
\end{equation}

\end{corollary}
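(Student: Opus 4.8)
This is an immediate specialization of Proposition~\ref{a03a04}, so the plan is short: start from equation~(\ref{eq:deral2}) with $i=3$, apply the index substitution $0\to 4$ announced just before the statement, and then solve the resulting identity for the dihedral angle $\alpha_{g_{4}}$.

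First I would record how each symbol of~(\ref{eq:deral2}) transforms under $0\to 4$ while keeping $i=3$ fixed: $a_{03}\mapsto a_{43}$, $a_{02}\mapsto a_{42}$, and $h_{0,12}\mapsto h_{4,12}$, the latter via the same substitution in~(\ref{height012}), which produces precisely~(\ref{height412}); on the other hand $a_{23}$ and the base angle at $A_{2}$ stay put because they involve none of $A_{0}$. The crucial observation concerns the factor $\cos(\alpha_{g_{3}}-\alpha)$. With $i=3$ fixed, $\alpha_{g_{3}}$ is the dihedral angle of $\triangle A_{1}A_{2}A_{3}$ with itself, hence $\alpha_{g_{3}}=0$ and that factor is simply $\cos\alpha$. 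Now $\alpha$ is by definition the dihedral angle between the planes $\triangle A_{1}A_{2}A_{3}$ and $\triangle A_{1}A_{2}A_{0}$, so replacing $A_{0}$ by $A_{4}$ turns it into the dihedral angle between $\triangle A_{1}A_{2}A_{3}$ and $\triangle A_{1}A_{2}A_{4}$, which is exactly $\alpha_{g_{4}}$ (the sign of the dihedral angle plays no role, since cosine is even). Hence~(\ref{eq:deral2}) becomes
\begin{equation*}
a_{43}^{2}=a_{42}^{2}+a_{23}^{2}-2a_{23}\bigl[\sqrt{a_{42}^{2}-h_{4,12}^{2}}\,\cos\alpha_{123}+h_{4,12}\sin\alpha_{123}\cos\alpha_{g_{4}}\bigr].
\end{equation*}

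The remaining step is pure algebra: isolate the unique term carrying $\cos\alpha_{g_{4}}$ and divide by $2a_{23}h_{4,12}\sin\alpha_{123}$, which is strictly positive because $a_{23}>0$, because $A_{1},A_{2},A_{3}$ are non-collinear so $\sin\alpha_{123}>0$, and because $A_{4}$ does not lie on the line $A_{1}A_{2}$ so $h_{4,12}>0$; applying $\arccos$ then yields~(\ref{ag4}). I do not expect a genuine obstacle here. The only points deserving care are: that the argument of $\arccos$ in~(\ref{ag4}) automatically lies in $[-1,1]$, since by construction it is the cosine of an actual dihedral angle; that the square root $\sqrt{a_{42}^{2}-h_{4,12}^{2}}$ must be read with the same sign convention as in Proposition~\ref{a03a04}, i.e. as the signed length of the orthogonal projection of $A_{2}A_{4}$ onto the line $A_{1}A_{2}$, so that the identity persists when the foot of the altitude from $A_{4}$ in $\triangle A_{1}A_{2}A_{4}$ leaves the segment $A_{1}A_{2}$; and that the base angle in the numerator is the angle at $A_{2}$ inherited from~(\ref{eq:deral2}), whereas the companion form~(\ref{eq:derall2n}) would instead pair the angle $\alpha_{213}$ at $A_{1}$ with $\sqrt{a_{41}^{2}-h_{4,12}^{2}}$.
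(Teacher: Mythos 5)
Your proposal is correct and is exactly the paper's (unstated) argument: the corollary is obtained by setting $i=3$ in (\ref{eq:deral2}), performing the substitution $0\to 4$ (under which $\alpha$ becomes $\alpha_{g_{4}}$ and $\alpha_{g_{3}}=0$ makes the last factor $\cos\alpha_{g_{4}}$), and solving for the dihedral angle. Note that your derivation yields $\cos\alpha_{123}$ in the numerator, consistent with the pairing of $\sqrt{a_{42}^{2}-h_{4,12}^{2}}$ with the angle at $A_{2}$ in (\ref{eq:deral2}); the $\alpha_{213}$ appearing in (\ref{ag4}) (and likewise in (\ref{alpha})) is evidently a typographical slip, which your closing remark correctly identifies.
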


By solving (\ref{eq:deral2}) w.r. to $\alpha$ for $i=3$ and by replacing the derived formula (\ref{eq:deral2}) for $i=4$
and taking into account (\ref{ag4}), we derive that $a_{04}=a_{04}(a_{01},a_{02},a_{03}).$

\begin{corollary}\label{coroa04}
\begin{equation}\label{a04a01a02a03}
a_{04}^2=a_{02}^2+a_{24}^2-2a_{24}[\sqrt{a_{02}^2-h_{0,12}^2}\cos({\alpha_{124}})+h_{0,12}\sin({\alpha_{124}})(\cos({\alpha_{g_{4}}})\cos\alpha+\sin({\alpha_{g_{4}}})\sin\alpha)]
\end{equation}

where

\begin{equation}\label{alpha}
\alpha=\arccos(\frac{a_{02}^2+a_{23}^2-a_{03}^2-2a_{23}\sqrt{a_{02}^2-h_{0,12}^2}\cos({\alpha_{213}})}{2a_{23}h_{0,12}\sin\alpha_{123}}).
\end{equation}

\end{corollary}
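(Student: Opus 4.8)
The plan is to obtain (\ref{a04a01a02a03}) as an essentially immediate consequence of (\ref{eq:deral2}) of Proposition~\ref{a03a04} once the two dihedral quantities occurring in it, namely $\alpha$ and $\alpha_{g_{4}}$, are expressed in the required variables. Concretely, specialize (\ref{eq:deral2}) to $i=4$,
\[
a_{04}^2=a_{02}^2+a_{24}^2-2a_{24}\bigl[\sqrt{a_{02}^2-h_{0,12}^2}\cos\alpha_{124}+h_{0,12}\sin\alpha_{124}\cos(\alpha_{g_{4}}-\alpha)\bigr],
\]
and expand $\cos(\alpha_{g_{4}}-\alpha)=\cos\alpha_{g_{4}}\cos\alpha+\sin\alpha_{g_{4}}\sin\alpha$. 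This already produces (\ref{a04a01a02a03}); it remains only to justify the formula (\ref{alpha}) for $\alpha$ and to observe that $\alpha_{g_{4}}$ and $h_{0,12}$ are functions of the advertised arguments.

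For $\alpha$, specialize (\ref{eq:deral2}) instead to $i=3$. With the convention of Proposition~\ref{a03a04}, $\alpha_{g_{3}}$ is the dihedral angle between the plane of $\triangle A_{1}A_{2}A_{3}$ and itself, hence $\alpha_{g_{3}}=0$ and $\cos(\alpha_{g_{3}}-\alpha)=\cos\alpha$; thus
\[
a_{03}^2=a_{02}^2+a_{23}^2-2a_{23}\bigl[\sqrt{a_{02}^2-h_{0,12}^2}\cos\alpha_{213}+h_{0,12}\sin\alpha_{123}\cos\alpha\bigr].
\]
Since $A_{0}$ is interior to $A_{1}A_{2}A_{3}A_{4}$ and the tetrahedron is non-degenerate, $a_{23}>0$, $h_{0,12}>0$ and $\sin\alpha_{123}>0$, so the coefficient $2a_{23}h_{0,12}\sin\alpha_{123}$ of $\cos\alpha$ does not vanish and solving for $\cos\alpha$ yields (\ref{alpha}).

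Finally, $\alpha_{g_{4}}$ is given by Corollary~\ref{alphag4} through (\ref{ag4}) in terms of the edge lengths $a_{12},a_{23},a_{42},a_{43}$ (and of $h_{4,12}$, which by (\ref{height412}) is itself a function of those edges) together with the fixed angles $\alpha_{213},\alpha_{123}$ of the tetrahedron; hence for a fixed tetrahedron $\alpha_{g_{4}}$ is a constant. By (\ref{height012}), $h_{0,12}$ is a function of $a_{01},a_{02}$ only, so by the previous paragraph $\alpha$ is a function of $a_{01},a_{02},a_{03}$. Substituting the constant $\alpha_{g_{4}}$ from (\ref{ag4}) and $\alpha$ from (\ref{alpha}) into the displayed $i=4$ identity therefore exhibits $a_{04}^{2}$ as a function of $a_{01},a_{02},a_{03}$ alone, which is the assertion.

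The one point that is not pure algebra is the orientation bookkeeping. Writing $\alpha=\arccos(\cdot)$ in (\ref{alpha}) and $\alpha_{g_{4}}=\arccos(\cdot)$ in (\ref{ag4}) places both angles in $[0,\pi]$, so $\sin\alpha\ge 0$ and $\sin\alpha_{g_{4}}\ge 0$; one must check that this is consistent with the orientation with which (\ref{eq:deral2}) and (\ref{ag4}) were derived in \cite{Zach/Zou:09}, i.e.\ that both dihedral angles are measured from the same half-plane bounded by the line $A_{1}A_{2}$ and on the same side, so that the sign in $\cos(\alpha_{g_{4}}-\alpha)=\cos\alpha_{g_{4}}\cos\alpha+\sin\alpha_{g_{4}}\sin\alpha$ is the correct one. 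This is the only genuine obstacle; the remainder is substitution.
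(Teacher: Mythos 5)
Your argument is exactly the paper's own (the paper's entire justification is the sentence preceding the corollary: solve (\ref{eq:deral2}) for $\alpha$ in the case $i=3$, where $\alpha_{g_{3}}=0$, substitute into the case $i=4$ with $\alpha_{g_{4}}$ supplied by (\ref{ag4}), and expand $\cos(\alpha_{g_{4}}-\alpha)$), so the proposal is correct and takes essentially the same approach, while adding a useful explicit check that the coefficient $2a_{23}h_{0,12}\sin\alpha_{123}$ is nonzero and a remark on the orientation conventions. No further comparison is needed.
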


\begin{proposition}\label{a04epsilon}
If $\alpha_{120}=\alpha_{124}-\epsilon,$ $\alpha_{102}= \alpha_{142}+k_{142} \epsilon,$
$\alpha_{203}= \alpha_{243}+k_{243}\epsilon,$ $\alpha_{103}=\alpha_{143}+k_{143}\epsilon$ and $\alpha_{023}=\alpha_{423}+k_{423}\epsilon,$
then $a_{04}=a_{04}(\epsilon).$
\end{proposition}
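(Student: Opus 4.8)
The plan is to carry out the chain of substitutions already prepared by Corollary~\ref{coroa04}, which expresses $a_{04}^{2}$ in terms of $a_{01},a_{02},a_{03}$ and the fixed data of the tetrahedron; it then remains to show that the five hypotheses force $a_{01},a_{02},a_{03}$ --- and hence the auxiliary quantities $h_{0,12}$ and $\alpha$ --- to be functions of $\epsilon$ alone. Throughout, the tetrahedron $A_{1}A_{2}A_{3}A_{4}$ is treated as given, so all its edge lengths $a_{ij}$ with $i,j\in\{1,2,3,4\}$ and all of its angles, in particular $\alpha_{124},\alpha_{142},\alpha_{243},\alpha_{143},\alpha_{423},\alpha_{123},\alpha_{213}$, are constants, and so are the rational numbers $k_{142},k_{243},k_{143},k_{423}$. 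By Corollary~\ref{alphag4} the dihedral angle $\alpha_{g_{4}}$ is obtained from $a_{42},a_{23},a_{43},\alpha_{213},\alpha_{123}$ and $h_{4,12}$, all of which depend only on the tetrahedron, so $\alpha_{g_{4}}$ is a constant as well.

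First I would determine $a_{01}$ and $a_{02}$. In the triangle $\triangle A_{0}A_{1}A_{2}$ the side $A_{1}A_{2}$ has fixed length $a_{12}$, the angle at $A_{0}$ is $\alpha_{102}=\alpha_{142}+k_{142}\epsilon$ and the angle at $A_{2}$ is $\alpha_{120}=\alpha_{124}-\epsilon$, so the angle at $A_{1}$ equals $\alpha_{012}=\pi-\alpha_{102}-\alpha_{120}$, again an affine function of $\epsilon$. Thus all three angles and one side of this triangle are prescribed functions of $\epsilon$, and the law of sines gives
\[
a_{01}=a_{12}\,\frac{\sin\alpha_{120}}{\sin\alpha_{102}},\qquad a_{02}=a_{12}\,\frac{\sin\alpha_{012}}{\sin\alpha_{102}};
\]
hence $a_{01}=a_{01}(\epsilon)$, $a_{02}=a_{02}(\epsilon)$, and by (\ref{height012}) also $h_{0,12}=h_{0,12}(\epsilon)$.

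Next I would recover $a_{03}=a_{03}(\epsilon)$ from the relation $\alpha_{023}=\alpha_{423}+k_{423}\epsilon$. In $\triangle A_{0}A_{2}A_{3}$ the side $A_{2}A_{3}=a_{23}$ is fixed, the side $A_{2}A_{0}=a_{02}(\epsilon)$ is now known, and the included angle at $A_{2}$ is $\alpha_{023}(\epsilon)$; the law of cosines then yields
\[
a_{03}^{2}=a_{02}^{2}+a_{23}^{2}-2a_{02}a_{23}\cos\alpha_{023},
\]
so $a_{03}=a_{03}(\epsilon)$. Substituting $a_{02}(\epsilon),a_{03}(\epsilon),h_{0,12}(\epsilon)$ into (\ref{alpha}), whose remaining ingredients $\alpha_{213},\alpha_{123}$ are constants, shows $\alpha=\alpha(\epsilon)$. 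Inserting $a_{02}(\epsilon)$, $h_{0,12}(\epsilon)$, $\alpha(\epsilon)$ together with the constants $a_{24},\alpha_{124},\alpha_{g_{4}}$ into (\ref{a04a01a02a03}) of Corollary~\ref{coroa04} then expresses $a_{04}^{2}$, hence $a_{04}$, as a function of $\epsilon$ alone.

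The trigonometric identifications above are routine; the delicate point is the location of $A_{0}$. The three relations on $\alpha_{120},\alpha_{102},\alpha_{023}$ already pin down the three free parameters $a_{01},a_{02},a_{03}$ of $A_{0}$, so the two remaining relations on $\alpha_{203}$ and $\alpha_{103}$ are compatibility conditions: one should either verify that they hold automatically or read them as constraints determining the admissible rational constants $k_{243},k_{143}$, and one must stay clear of the degenerate configurations in which some $\arccos$ argument exits $[-1,1]$ or one of the auxiliary triangles collapses. Granting such admissibility, the construction above establishes the claim.
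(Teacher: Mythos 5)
Your proof is correct and takes essentially the same route as the paper: the paper likewise obtains $a_{01}(\epsilon)$ and $a_{02}(\epsilon)$ from the law of sines in $\triangle A_{1}A_{0}A_{2}$ and then substitutes into (\ref{a04a01a02a03}), the only mechanical difference being that for $a_{03}$ it applies the law of sines in $\triangle A_{2}A_{0}A_{3}$ (using $\alpha_{023}$ together with $\alpha_{203}$) where you use the law of cosines with $a_{02}(\epsilon)$ and $\alpha_{023}$. Your closing observation that the five angle conditions over-determine the three degrees of freedom of $A_{0}$, so that two of them act as compatibility constraints on the $k$'s, is not addressed in the paper's proof of this proposition but reappears there as the functional dependence $F(k_{142},k_{203},k_{143},k_{423})=0$ in Theorem~\ref{echarR3}.
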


\begin{proof}
By applying the law of sines in $\triangle A_{1}A_{0}A_{2},$
we get:

\begin{equation}\label{a01e}
a_{01}(\epsilon)=\frac{\sin(\alpha_{124}-\epsilon)a_{12}}{\sin(\alpha_{142}+k_{142} \epsilon)}.
\end{equation}

and

\begin{equation}\label{a02e}
a_{02}(\epsilon)=\frac{\sin(\alpha_{124}+(k_{142}-1)\epsilon+\alpha_{142})a_{12}}{\sin(\alpha_{142}+k_{142} \epsilon)}.
\end{equation}

By applying the law of sines in $\triangle A_{2}A_{0}A_{3},$
we get:

\begin{equation}\label{a03e}
a_{03}(\epsilon)=\frac{\sin(\alpha_{423}+k_{423}\epsilon)a_{23}}{\sin(\alpha_{243}+k_{243} \epsilon)}.
\end{equation}

By replacing (\ref{a01e}), (\ref{a02e}), (\ref{a03e}) in (\ref{a04a01a02a03})
we obtain that\\
 $a_{04}=a_{04}(a_{01}(\epsilon),a_{02}(\epsilon),a_{03}(\epsilon))=a_{04}(\epsilon).$

\end{proof}



\begin{theorem}\label{echarR3}
The weight $B_{i}=B_{i}(\epsilon)$ are uniquely determined by the formula:
\begin{equation}\label{inverse111e}
B_{i}=\frac{C}{1+\|\frac{\sin{\alpha_{i,k0l}}}{\sin{\alpha_{j,k0l}}}\|+\|\frac{\sin{\alpha_{i,j0l}}}{\sin{\alpha_{k,j0l}}}\|+\|\frac{\sin{\alpha_{i,k0j}}}{\sin{\alpha_{l,k0j}}}\|},
\end{equation}

where

\begin{equation}\label{ratioije}
\|\frac{\sin{\alpha_{i,k0m}}}{\sin{\alpha_{j,k0m}}}\|=\sqrt{ \| \frac{\sin^{2}\alpha_{k0m}-\cos^{2}\alpha_{m0i}-\cos^{2}\alpha_{k0i}+2\cos\alpha_{k0m}\cos\alpha_{m0i}\cos\alpha_{k0i}}{
\sin^{2}\alpha_{k0m}-\cos^{2}\alpha_{m0j}-\cos^{2}\alpha_{k0j}+2\cos\alpha_{k0m}\cos\alpha_{m0j}\cos\alpha_{k0j}} \| }
\end{equation}

depends on $\epsilon,$ $\alpha_{102}(\epsilon),$ $\alpha_{103}(\epsilon),$ $\alpha_{203}(\epsilon),$ and $\alpha_{204}(\epsilon),$
such that there exists a functional dependence between the rational numbers $k_{142},$ $k_{203},$ $k_{143}$ and $k_{423},$

\begin{equation}\label{fd}
F(k_{142},k_{203},k_{143},k_{423})=0,
\end{equation}

for $i,j,k,l=1,2,3,4$ and $i \neq j\neq k\neq l.$

\end{theorem}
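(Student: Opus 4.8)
The plan is to trace the $\epsilon$-parametrization through the chain of formulas already established, reducing everything to the five free angles that govern the inverse problem, and then to extract the constraint $F(k_{142},k_{203},k_{143},k_{423})=0$ from a consistency requirement. First I would recall from Proposition~\ref{propo5} and from \eqref{ratioji} that $B_i$ is a function of the angles $\alpha_{i,k0m}$, which by the cited Lemma (\cite[Proposition~1]{Zachos:20}) depend on exactly the five angles $\alpha_{102},\alpha_{103},\alpha_{104},\alpha_{203},\alpha_{204}$, with the sixth angle $\alpha_{304}$ determined by \eqref{calcalpha3042}--\eqref{calcalpha304auxvar}. Hence to prove the theorem it suffices to express each of these five angles as a function of $\epsilon$ (and of the $k$'s), substitute into \eqref{ratioije} and \eqref{inverse111e}, and verify that the result is well defined, i.e. that the various expressions for the same ratio agree.

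The second step is to set up the $\epsilon$-dependence of the angles at $A_0$. The hypotheses directly give $\alpha_{120}(\epsilon)=\alpha_{124}-\epsilon$, $\alpha_{102}(\epsilon)=\alpha_{142}+k_{142}\epsilon$, $\alpha_{203}(\epsilon)=\alpha_{243}+k_{203}\epsilon$, $\alpha_{103}(\epsilon)=\alpha_{143}+k_{143}\epsilon$ and $\alpha_{023}(\epsilon)=\alpha_{423}+k_{423}\epsilon$; the angle sum in $\triangle A_1A_0A_2$ then yields $\alpha_{012}(\epsilon)$ as in the derivation of \eqref{a02e}. From Proposition~\ref{a04epsilon} we already have $a_{01}(\epsilon),a_{02}(\epsilon),a_{03}(\epsilon)$ from \eqref{a01e}--\eqref{a03e}, and $a_{04}(\epsilon)$ from \eqref{a04a01a02a03}--\eqref{alpha}; combining these with the law of cosines in the triangles $\triangle A_1A_0A_4$, $\triangle A_2A_0A_4$, $\triangle A_1A_0A_3$ (and with $a_{1i},a_{2i},a_{3i}$ fixed data of the tetrahedron) produces $\alpha_{104}(\epsilon)$, $\alpha_{204}(\epsilon)$ and re-expresses $\alpha_{103}(\epsilon)$ purely in terms of $\epsilon$. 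Substituting into \eqref{ratioije} gives each $\|\sin\alpha_{i,k0m}/\sin\alpha_{j,k0m}\|$ as an explicit function of $\epsilon$ and the four rational parameters, and then \eqref{inverse111e} gives $B_i=B_i(\epsilon)$; uniqueness is inherited from Proposition~\ref{propo5} once the argument angles are fixed.

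The third and decisive step is to produce the functional relation \eqref{fd}. The point is that the five angles at $A_0$ cannot all be prescribed independently: the inverse-problem Lemma says that $\alpha_{304}$ is forced by \eqref{calcalpha3042}. Now $\alpha_{304}(\epsilon)=\angle A_3A_0A_4$ can also be computed geometrically from the edge lengths $a_{03}(\epsilon),a_{04}(\epsilon)$ and the fixed length $a_{34}$ via the law of cosines in $\triangle A_3A_0A_4$. Equating this geometric value of $\cos\alpha_{304}(\epsilon)$ with the value delivered by \eqref{calcalpha3042} (evaluated at the $\epsilon$-parametrized angles) gives an identity in $\epsilon$; since the parametrization is affine in $\epsilon$ with slopes $k_{142},k_{203},k_{143},k_{423}$, expanding this identity (most cleanly, comparing the coefficients order by order in the $\epsilon$-expansion, or simply requiring the identity to hold for the specified range of $\epsilon$) forces an algebraic relation among the four slopes, which is exactly $F(k_{142},k_{203},k_{143},k_{423})=0$.

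I expect the main obstacle to be the bookkeeping in that last step: writing $\cos\alpha_{304}(\epsilon)$ two ways and reducing the resulting transcendental identity to a polynomial condition on the $k$'s requires carefully tracking which quantities are constants of the tetrahedron and which carry $\epsilon$, and then performing the $\epsilon$-Taylor expansion (it is enough to match the first nonvanishing order beyond the $\epsilon=0$ degenerate configuration, since at $\epsilon=0$ the tree degenerates onto the three edges at $A_4$ and the identity holds trivially). The existence of \emph{some} nontrivial $F$ is guaranteed because we are imposing one scalar identity on a four-parameter family; the content of the theorem is precisely that this single relation is the only obstruction, so that once $k_{142},k_{203},k_{143},k_{423}$ satisfy \eqref{fd}, formula \eqref{inverse111e} consistently returns well-defined weights $B_i(\epsilon)$.
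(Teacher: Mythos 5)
Your proposal follows essentially the same route as the paper: parametrize $a_{01},a_{02},a_{03},a_{04}$ by $\epsilon$ via Proposition~\ref{a04epsilon}, recover $\alpha_{204}(\epsilon)$ and $\alpha_{304}(\epsilon)$ from the law of cosines, obtain the relation $F(k_{142},k_{203},k_{143},k_{423})=0$ by equating the law-of-cosines value of $\cos\alpha_{304}$ with the value produced by \eqref{calcalpha3042} at the $\epsilon$-shifted angles, and then feed the five $\epsilon$-dependent angles into \eqref{inverse111}. The only difference is presentational: you propose extracting $F=0$ by matching orders in an $\epsilon$-expansion, while the paper simply asserts the identity of the two expressions as the functional dependence.
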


\begin{proof}
By replacing (\ref{a02e}) and $a_{04}(\epsilon)$ in the law of cosines in $\triangle A_{2}A_{0}A_{4},$
we obtain :

\begin{equation}\label{cosalpha204}
\alpha_{204}(\epsilon)=\arccos(\frac{(a_{02}(\epsilon))^2+(a_{04}(\epsilon))^2-a_{24}^2}{2a_{02}(\epsilon)a_{04}(\epsilon)}).
\end{equation}

By replacing (\ref{a03e}) and $a_{04}(\epsilon)$ in the law of cosines in $\triangle A_{3}A_{0}A_{4},$
we obtain :

\begin{equation}\label{cosalpha304}
\alpha_{304}(\epsilon)=\arccos(\frac{(a_{03}(\epsilon))^2+(a_{04}(\epsilon))^2-a_{34}^2}{2a_{03}(\epsilon)a_{04}(\epsilon)}).
\end{equation}

By replacing (\ref{cosalpha204}), $\alpha_{102}= \alpha_{142}+k_{142} \epsilon,$
$\alpha_{203}= \alpha_{243}+k_{243}\epsilon,$ $\alpha_{103}=\alpha_{143}+k_{143}\epsilon$ in (\ref{calcalpha3042}) and by substituting the derived result in the left side of (\ref{cosalpha304}), we derive a functional dependence $F(k_{142},k_{203},k_{143},k_{423})=0.$
By replacing the five angles $\alpha_{102}(\epsilon),$ $\alpha_{103}(\epsilon),$ $\alpha_{104}(\epsilon),$ $\alpha_{203}(\epsilon),$
and $\alpha_{204}(\epsilon),$ for three given rational numbers  $k_{142},$ $k_{203},$ $k_{143}$ and the fourth rational $k_{423}$ is determined by (\ref{fd}) in (\ref{inverse111}), we obtain that the weights $B_{i}(\epsilon)$ are determined by (\ref{inverse111e}).

\end{proof}

\begin{corollary}
For $\epsilon\to 0,$ we derive a degenerate weighted Fermat-Torricelli tree $\{A_{1}A_{4}, A_{2}A_{4}, A_{3}A_{4}\}.$
\end{corollary}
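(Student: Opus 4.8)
The plan is to show that, as $\epsilon\to 0$, the interior point $A_{0}$ underlying the weights $B_{i}(\epsilon)$ of Theorem~\ref{echarR3} collapses onto the vertex $A_{4}$, so that the four branches of the associated floating-case weighted Fermat--Torricelli tree degenerate: the branch $A_{0}A_{4}$ shrinks to the point $A_{4}$, while the remaining three branches $A_{0}A_{1},A_{0}A_{2},A_{0}A_{3}$ tend to the three edges $A_{4}A_{1},A_{4}A_{2},A_{4}A_{3}$ of the tetrahedron.

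First I would pass to the limit $\epsilon\to 0$ in the formulas of Proposition~\ref{a04epsilon}. By continuity of $\sin$, (\ref{a01e}) gives $a_{01}(0)=\sin\alpha_{124}\,a_{12}/\sin\alpha_{142}$, which by the law of sines in $\triangle A_{1}A_{2}A_{4}$ equals $a_{14}$; likewise (\ref{a02e}) yields $a_{02}(0)=a_{24}$ and (\ref{a03e}) yields $a_{03}(0)=a_{34}$, using the law of sines in $\triangle A_{2}A_{3}A_{4}$. Substituting $a_{01}(0)=a_{14}$ and $a_{02}(0)=a_{24}$ into (\ref{height012}) shows $h_{0,12}(0)=h_{4,12}$, the altitude of $\triangle A_{1}A_{2}A_{4}$ appearing in (\ref{height412}). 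Comparing the right-hand side of (\ref{alpha}) at $\epsilon=0$ term by term with (\ref{ag4}) then gives $\alpha(0)=\alpha_{g_{4}}$; geometrically, the plane of $\triangle A_{1}A_{2}A_{0}$ rotates into the plane of $\triangle A_{1}A_{2}A_{4}$.

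Next I would evaluate (\ref{a04a01a02a03}) at $\epsilon=0$. Since $\alpha(0)=\alpha_{g_{4}}$ we have $\cos\alpha_{g_{4}}\cos\alpha+\sin\alpha_{g_{4}}\sin\alpha=\cos(\alpha_{g_{4}}-\alpha)=1$, and using $a_{02}(0)=a_{24}$ together with the right-triangle identities $\sqrt{a_{24}^{2}-h_{4,12}^{2}}=a_{24}\cos\alpha_{124}$ and $h_{4,12}=a_{24}\sin\alpha_{124}$ inside $\triangle A_{1}A_{2}A_{4}$, the bracket in (\ref{a04a01a02a03}) becomes $a_{24}(\cos^{2}\alpha_{124}+\sin^{2}\alpha_{124})=a_{24}$, so $a_{04}^{2}(0)=a_{24}^{2}+a_{24}^{2}-2a_{24}^{2}=0$. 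Hence $a_{04}(\epsilon)\to 0$ and $a_{0i}(\epsilon)\to a_{4i}$ for $i=1,2,3$ as $\epsilon\to 0$, i.e. $A_{0}\to A_{4}$. Consequently the weights $B_{i}(\epsilon)$ of (\ref{inverse111e}), being continuous in the angles $\alpha_{k0m}(\epsilon)$, tend to a quadruple $(B_{1},B_{2},B_{3},B_{4})$ for which the four-branch tree has collapsed the branch $A_{0}A_{4}$ to the point $A_{4}$ and retained $\{A_{1}A_{4},A_{2}A_{4},A_{3}A_{4}\}$; this is precisely the degenerate weighted Fermat--Torricelli tree of the absorbed case at $A_{4}$ listed in Section~1, and the limiting weights satisfy the boundary equality $\big\|\sum_{j=1}^{4}B_{j}\vec u(A_{4},A_{j})\big\|=B_{4}$, i.e. (\ref{bb4}).

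The main obstacle is the justification that $\alpha(\epsilon)\to\alpha_{g_{4}}$: this requires a careful term-by-term comparison of (\ref{alpha}) and (\ref{ag4}) under the limits $a_{0i}(\epsilon)\to a_{4i}$ and $h_{0,12}(\epsilon)\to h_{4,12}$, and then the verification that the right-hand side of (\ref{a04a01a02a03}) genuinely vanishes at $\epsilon=0$ rather than merely being small. A secondary point worth recording is that for all sufficiently small $\epsilon>0$ the quadruple $(B_{1}(\epsilon),\dots,B_{4}(\epsilon))$ satisfies the strict floating-case inequalities of Theorem~\ref{theor}(ii), so that for such $\epsilon$ the tree $\{A_{1}A_{0},A_{2}A_{0},A_{3}A_{0},A_{4}A_{0}\}$ is a genuine non-degenerate weighted Fermat--Torricelli tree whose limit, as just computed, is $\{A_{1}A_{4},A_{2}A_{4},A_{3}A_{4}\}$.
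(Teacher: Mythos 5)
Your proposal is correct, and it follows the route the paper evidently intends: the paper states this corollary without any proof, treating it as immediate from the construction, whereas you supply the missing verification that $a_{01}(\epsilon)\to a_{14}$, $a_{02}(\epsilon)\to a_{24}$, $a_{03}(\epsilon)\to a_{34}$ and $a_{04}(\epsilon)\to 0$, i.e.\ that $A_{0}\to A_{4}$, so the four-branch tree collapses to $\{A_{1}A_{4},A_{2}A_{4},A_{3}A_{4}\}$. The limit computations (law of sines identifications, $h_{0,12}(0)=h_{4,12}$, $\alpha(0)=\alpha_{g_{4}}$, and the vanishing of the bracket in (\ref{a04a01a02a03})) all check out, modulo the same implicit acute-angle sign conventions the paper itself uses.
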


\begin{remark}
By substituting $B_{1}(\epsilon),$ $B_{2}(\epsilon),$ $B_{3}(\epsilon)$ in (\ref{bb4}), we obtain
$B_{4}:$

\[B_{4}=\]
\[=\sqrt{B_{1}(\epsilon)^2+B_{2}(\epsilon)^2+B_{3}(\epsilon)^2+2B_{1}(\epsilon)B_{2}(\epsilon)\cos\alpha_{142}+ 2B_{1}(\epsilon)B_{3}(\epsilon)\cos\alpha_{143}+2B_{2}(\epsilon)B_{3}(\epsilon)\cos\alpha_{243}}.\]
Thus, $\|B_{4}-B_{4}(\epsilon)\|$ gives an $\epsilon$ approximation of the optimal value of $B_{4},$ that achieves the vertex $A_{4}.$
\end{remark}

\end{document}